\newcommand{\pr}{\mathbf{Pr}}
\newcommand{\p}[1]{{\mathbf P}\left\{#1\right\}}
\newcommand{\set}[1]{\left\{ #1 \right\}}
 \newcommand{\bag}{\begin{align}}
\newcommand{\bags}{\begin{align*}}
\newcommand{\eag}{\end{align*}}
\newcommand{\eags}{\end{align*}}
\newtheorem{thm}{Theorem}
\newtheorem{lem}[thm]{Lemma}
\newcommand\cC{\mathcal C}
\newcommand\cG{\mathcal G}
\newcommand{\beq}[2]{\begin{equation}\label{#1}#2\end{equation}}
\newcommand{\brac}[1]{\left(#1\right)}
\newcommand{\bfrac}[2]{\brac{\frac{#1}{#2}}}
\begin{document}
\title{How many randomly colored edges make a randomly colored dense graph rainbow hamiltonian or rainbow connected?}
\author{ Michael Anastos\footnote{Department of Mathematical Sciences, Carnegie Mellon University, Pittsburgh PA 15213, USA, Email: manastos@andrew.cmu.edu,  Research supported in part by NSF grant DMS0753472} and Alan Frieze\footnote{Department of Mathematical Sciences, Carnegie Mellon University, Pittsburgh PA 15213, USA, Email: alan@random.math.cmu.edu, Research
supported in part by NSF grant DMS0753472}}

\maketitle
\begin{abstract}
In this paper we study the randomly edge colored graph that is obtained by adding randomly colored  random edges to an arbitrary randomly edge colored dense graph. In particular we ask how many colors and how many random edges are needed so that the resultant graph contains a fixed number of edge disjoint rainbow Hamilton cycles. We also ask when in the resultant graph every pair of vertices is connected by a rainbow path. 
\end{abstract}

\section{Introduction}
In this paper we study the following random graph model:
We start with a graph $H=(V,E)$ and a set $R$  of $m$ edges chosen uniformly at random from 
$\binom{[n]}{2}\backslash E$, that is from the edges not found in $H$. We then add to $H$ the edges  in $R$ to get the graph
$$G_{H,m}= (V,E\cup R).$$
After this, we color every edge of $G_{H,m}$ independently and uniformly at random with a color from $[r]$. We denote the resultant colored graph by 
$$G_{H,m}^r=(V,E\cup R,c) .$$
Here $c:E\cup R \mapsto [r]$ is the function that assigns to every edge in $E\cup R$ its color.
\vspace{3mm}
\\The  random graph model $G_{H,m}$ was first introduced  by Bohman, Frieze and Martin 
in \cite{BFM}. It can be consider as an extension of the Erd\H{o}s-R\'enyi model which we can retrieve from $G_{H,m}$ by setting $H=\emptyset$.
The main motivation for this model is the following: Let $\cal{G}$ be some class of graphs and $\cal{P}$ be a property, an increasing property in our case, that is satisfied by almost all the members of $\cal{G}$. The following question arises. For any graph $G\in \cal{G}$, suppose we perturb slightly its edge set at random, by adding a few random edges.
How many are needed so that the result is a graph that satisfies property $\cal{P}$. In \cite{BFM} they study the case where $\cal{G}$ is the set of graphs of minimum degree $\delta n$, $\delta>0$
and $\cal{P}$ is the property of a graph having a Hamilton cycle. They show that a linear number of random edges suffices in order to make any member of $\cal{G}$ Hamiltonian
w.h.p.\@ \footnote{We say that a sequence of events ${\cal{E}}_n$ holds \emph{with high probability} if $\p{{\cal{E}}_n} \rightarrow 1$ as $n \rightarrow \infty$.}
They also point out that for $\delta< 0.5$, complete bipartite graphs with bipartitions of sizes $\delta n$ and $(1-\delta) n$ need a linear number of random edges in order to became Hamiltonian. Here it is worth mentioning then in the Erd\H{o}s-R\'enyi random graph, $G(n,m)$, the threshold for Hamiltonicity is $(\log n +\log \log n)n$. 
$G_{H,m}$ has since been studied in a number of other contexts, see for example \cite{BFKM}, \cite{BMPP}, \cite{KKS1}, \cite{KKS2} and \cite{MM}. 
\vspace{3mm}
\\In this paper we enhance this model by randomly coloring edges. We $[r]$-color the edges of $G_{H,m}$ independently and uniformly at random and we denote the resultand graph by $G_{H,m}^r$.
We then ask about the existence of rainbow Hamilton cycle in  $G_{H,m}$ and whether $G_{H,m}^r$ is rainbow connected.
\vspace{3mm}
\\A Hamilton cycle is called rainbow if no color appears twice on its edges. It was shown by Frieze and Loh \cite{FL}  and by Ferber and Krivelevich \cite{FKr} that for $m\geq (1+o(1))(\log n+\log \log n)n$ if we color $G(n,m)$ randomly with $(1+o(1))n$ colors then $G(n,m)$ contains a rainbow Hamilton cycle. This implies that if we randomly $[(1+o(1))n]$-color a typical graph of minimum degree $\delta n$ then w.h.p.\@ the resultant graph will contain a rainbow Hamilton cycle.  
In comparison, as mentioned earlier, a graph of linear minimum degree needs only a linear number of edges in order to became Hamiltonian.
\vspace{3mm}
\\We say a graph is rainbow connected if every pair of its vertices are connected by a rainbow path. For a fixed graph $G$ of minimum degree $\delta$ is known that $\frac{\log \delta }{\delta}n(1+f(d))$ colors are needed in order to color it such that the resultant graph is rainbow connected.
\vspace{3mm}
\\The class of graphs of interest in our case will be the graphs on $n$ vertices, of minimum degree $\delta n$ with $\delta >0$ which we denote by ${\cal{G}}(n,\delta)$. For the rest of this paper we let $0<\delta <0.5 $ and $H$ be an arbitrary member of ${\cal{G}}(n,\delta)$. We also let 
\beq{defs}{
\theta=\theta(\delta)=-\log \delta \hspace{5mm}\text{ and }\hspace{5mm}
t=t(\delta)=\min\bigg\{\frac{\delta n}{260}, \frac{n}{1000+200\theta}\bigg\}.
}
Our first theorem builds on the Hamiltonicity result of \cite{BFM}.
\begin{thm}\label{HamRainbow}
Let $m\geq \min \big\{ (435+75\theta)tn,  \big|\binom{[n]}{2} \setminus E(H)\big| \big\}$ and 
$r\geq(120+20\theta)n$ then,  w.h.p.\@ $G^r_{H,m}$ contains $t$ edge disjoint rainbow Hamilton cycles.
\end{thm}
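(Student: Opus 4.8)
The plan is to build the $t$ cycles one at a time, reducing the theorem to a single-round statement that is then iterated. Partition the random edges into $t$ batches $R_1,\dots,R_t$, each of size $\lfloor m/t\rfloor\ge(435+75\theta)n$, and when constructing the $i$-th cycle use only $R_i$ together with the $H$-edges not already used by cycles $1,\dots,i-1$. (When the minimum defining $m$ in \eqref{defs} is attained by the second term, $H$ is within $(435+75\theta)tn$ edges of complete and the problem is only easier, so assume the first term.) A Hamilton cycle meets each vertex in exactly two edges, hence in at most two $H$-edges, so after $i-1<t$ rounds the surviving base graph $H_i\subseteq H$ still has minimum degree at least $\delta n-2t\ge(1-\tfrac1{130})\delta n$, where we used $t\le\delta n/260$; thus $H_i\in\cG(n,\delta')$ with $\delta'\ge0.99\delta$ in every round. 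The cycles are edge-disjoint by construction, and because they may share colours the palette $[r]$ is reusable from scratch each round. It therefore suffices to show that for a graph of minimum degree $\delta' n$, a batch of $(435+75\theta)n$ random edges, and $r\ge(120+20\theta)n$ random colours, a rainbow Hamilton cycle exists with probability $1-o(1/n)$; since $t\le n$, a union bound over the $t=O(n)$ rounds then proves the theorem.

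For the single-round statement I would first establish, from the minimum-degree condition together with the batch, that the union $H_i\cup R_i$ is a strong vertex-expander of the kind required by the P\'osa rotation--extension method: every vertex set of size at most $n/2$ has more than twice as many external neighbours. The base graph alone can fail this badly --- the complete bipartite graph $K_{\delta n,(1-\delta)n}$ is the extremal obstruction --- so the random edges must supply the expansion for the deficient sets, and the union bound over all small sets that this demands is what drives the random-edge budget to grow with $\theta=-\log\delta$. Given expansion, rotation--extension produces a Hamilton cycle; the new ingredient is to run the entire process inside the \emph{colour-fresh} subgraph, meaning that at each moment one only extends along, or rotates with, an edge whose colour differs from the at most $n$ colours already present on the current path.

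The colours then stay distinct automatically: each edge carries an independent uniform colour and so is colour-fresh with probability at least $1-n/r\ge1-1/(120+20\theta)$, whence passing to the colour-fresh subgraph deletes only a small fraction of the edges at each vertex and preserves the expansion estimates. The sparse regimes (large $\theta$) are precisely where one can least afford colour loss, which is why the $20\theta$ term in $r$ is present. Maintaining a single rainbow path while only ever using colour-fresh edges keeps it rainbow, and closing it with a colour-fresh random edge yields a rainbow Hamilton cycle.

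The main obstacle is that freshness is a moving target: each rotation alters the set of used colours, so the colour-fresh subgraph --- in which all the expansion and booster arguments must hold --- evolves as the construction proceeds and is correlated with the random choices already made. One must show that it continues to expand throughout the entire rotation--extension process, not merely at a fixed instant, and that the P\'osa booster sets always retain enough colour-fresh random edges; I expect this to require a martingale or second-moment concentration bound giving failure probability $o(1/n)$, strong enough to survive the union bound over the $O(n)$ rounds while simultaneously tolerating both the $H$-edges removed by earlier cycles and the colour-clashing edges removed in the current one. Verifying that neither deletion destroys expansion is the technical core, and it is there that the explicit constants $260$, $435+75\theta$ and $120+20\theta$ are calibrated.
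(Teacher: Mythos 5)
Your global strategy (split the random edges into $t$ batches, note that earlier cycles remove at most $2(t-1)\le \delta n/130$ edges at each vertex, and reduce to a single-round statement with failure probability $o(1/n)$) is sound and broadly parallels what the paper does, though the paper organizes the rounds differently: it splits $H$ into $H'\cup H''$ and invokes the packing theorem of Ferber, Kronenberg, Mousset and Shikhelman to carve $t$ edge-disjoint subgraphs out of $H'$, each coupled with a $6$-out subgraph, rather than peeling off used edges round by round. The real divergence, and the genuine gap, is in how a single rainbow cycle is produced. You propose to run the P\'osa rotation--extension process inside a ``colour-fresh'' subgraph that evolves as the path changes, and you yourself identify the obstacle: the set of forbidden colours, and hence the subgraph in which expansion must hold, is adapted to the colours already examined, so the independence needed for the expansion and booster estimates is destroyed. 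You assert that ``a martingale or second-moment concentration bound'' will fix this, but you do not exhibit one, and this is not a routine step --- it is the entire difficulty of the rainbow problem. As written, the single-round statement, which carries the whole weight of the theorem, is unproved.

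The paper's proof avoids this adaptivity problem entirely by decoupling the colouring from the Hamiltonicity argument. It first proves a purely uncoloured statement (Theorem \ref{Ham}: $H'_{6\text{-out}}\cup Q$ is Hamiltonian with probability $1-o(n^{-2})$ for a random $Q$ of size $(81+15\theta)n$), with P\'osa rotations running in a fixed, colour-blind graph. It then arranges for each cycle to live inside a subgraph that is \emph{globally rainbow}: Theorem \ref{auxiliary} supplies $t$ rainbow pieces of $H'$ each stochastically dominating a $6$-out subgraph, and a coupon-collector count shows that each block $Q_i'$ of $(435+75\theta)n$ random edges w.h.p.\ carries at least $(87+15\theta)n$ distinct colours, from which one extracts $(81+15\theta)n$ edges whose colours avoid each other and the at most $6n$ colours of $H_i$. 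Any Hamilton cycle in a rainbow graph is trivially rainbow, so no colour bookkeeping is needed during the rotations. This is why $r$ and the per-round edge budget have the values they do --- they are calibrated for the colour-counting step, not for the ``fraction of colour-clashing edges'' heuristic in your sketch. If you want to salvage your approach you would need to either carry out the concentration argument for the evolving colour-fresh graph in full, or adopt the paper's device of extracting a rainbow subgraph before searching for the cycle.
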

The above theorem states that a linear number of edges and a linear number of colors suffice in order for $G_{H,m}^r$ to have rainbow-Hamilton cycle. In addition it says that if you require multiple number of edge disjoint rainbow Hamilton cycles it suffices to multiply the number of random edges that are added.
\vspace{3mm}
\\Let $G$ be a graph of minimum degree $k$. A $k$-out random subgraph of $G$, denoted $G_{k-out}$, can be generated by adding independently at random $k$ edges incident to vertex $v$ for every $v\in V$. 
\vspace{3mm}
\\ We partition $H$ into two subgraphs as follows. We include every edge of $H$ into $E(H')$ independently with probability $p=\frac{1}{20}$.  We then set $E(H'')=E(H)\setminus E(H')$. Since $H$ has minimum degree $\delta n$
and $ p \cdot \delta n \gg \log n$, the Chernoff bounds imply that w.h.p.\@ $E(H')$ has minimum degree $\delta(H')\geq\frac{\delta n}{21}$ and maximum degree $\Delta(H')\leq \frac{n}{19}$. We partition $H$ so that it will be easier to expose the relative randomness in stages.
\vspace{3mm}
\\ To prove Theorem \ref{HamRainbow} we first reprove a non-colored version of it. Namely we show
\begin{thm}\label{Ham}
Let  $Q\subset H''\cup R$ be such that $|Q|=(81+15\theta)n$ and Q is distributed as a random subset of $\binom{[n]}{2}$ of the corresponding size. Then w.h.p.\@ $H_{6-out}'\cup Q$ is Hamiltonian
\end{thm}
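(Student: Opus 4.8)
The plan is to prove that $H'_{6-out}\cup Q$ is Hamiltonian by P\'osa's rotation--extension method. The two ingredients P\'osa needs are (i)~a connected host that is a good vertex-expander on sets up to linear size, and (ii)~a supply of \emph{boosters} --- non-edges whose addition lengthens a longest path or closes a Hamilton cycle. I would use $H'_{6-out}$ together with part of $Q$ to manufacture the expander, and the remaining part of $Q$ to supply the boosters. Concretely, split $Q$ into two disjoint random subsets $Q=Q_1\cup Q_2$ with $|Q_1|=\Theta(\theta n)$ and $|Q_2|=\Theta(n)$, and expose the randomness in the order: the $6$-out choices, then $Q_1$, then the edges of $Q_2$ one at a time. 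Throughout, condition on the w.h.p.\ event $\delta(H')\ge \delta n/21$, $\Delta(H')\le n/19$, and recall $Q$ is independent of $H'_{6-out}$ since $Q\subseteq H''\cup R$ while $H'_{6-out}\subseteq H'$.

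First I would show that $G_0:=H'_{6-out}\cup Q_1$ is connected and satisfies $|N_{G_0}(S)|\ge 2|S|$ for every $S$ with $|S|\le \alpha n$, for a suitable absolute constant $\alpha>0$. The $6$-out edges do the work for small sets: since each $v$ sends its $6$ random edges into $N_{H'}(v)$, a set of size $\ge \delta n/21$, the probability that all of the edges leaving a set $S$ fall inside a fixed $T\supseteq S$ of size $3|S|$ is at most $(63|S|/(\delta n))^{6|S|}$. Crucially, this step is also what guarantees minimum degree $6$; $Q_1$ alone, having only $\Theta(n)$ edges, would leave isolated vertices. For sets of linear size the $6$-out bound degrades (it already exceeds $1$ once $|S|\gtrsim \delta n/63$), and here $Q_1$ takes over: a given edge of $Q_1$ escapes $T$ from $S$ with probability $\gtrsim |S|/n$, so the probability that none does is at most $\exp(-\Omega(|Q_1|\,|S|/n))$. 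Combining the two estimates and applying a union bound over $S$ and $T$ gives, with $s=|S|$,
\[
\sum_{s=1}^{\alpha n}\binom{n}{s}\binom{n}{2s}\Big(\tfrac{63s}{\delta n}\Big)^{6s}\exp\!\Big(-\tfrac{c_1|Q_1|s}{n}\Big)
\le \sum_{s=1}^{\alpha n}\Big[\tfrac{e^3 63^6}{4\delta^6}\big(\tfrac{s}{n}\big)^{3}\exp\!\big(-\tfrac{c_1|Q_1|}{n}\big)\Big]^{s}.
\]
The bracketed base is maximized at $s=\alpha n$, and it is forced below $1$ precisely when $|Q_1|/n$ exceeds a constant multiple of $1+\log(1/\delta)=1+\theta$; this is the origin of the $\Theta(\theta)$ term in the edge count. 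A companion (and easier) union bound over pairs of disjoint $\alpha n$-sets shows $Q_1$ joins every two linear pieces, giving connectivity.

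With $G_0$ a connected $(\alpha n,2)$-expander, P\'osa's lemma says that whenever the current graph is not Hamiltonian it has at least $(\alpha n+1)^2/2=\Omega(n^2)$ boosters. I would then reveal the edges of $Q_2=\{e_1,e_2,\dots\}$ one at a time, setting $G_i=G_0\cup\{e_1,\dots,e_i\}$. Adding edges only improves expansion, so every $G_i$ remains an $(\alpha n,2)$-expander; hence, conditioned on the past, each $e_{i+1}$ is a booster with probability at least $(\alpha n+1)^2/(2\binom{n}{2})\ge \alpha^2=:c$. Since each booster increases the length of a longest path by at least one, at most $n$ boosters suffice to reach a Hamilton cycle. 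As $|Q_2|=\Theta(n)$ and each trial succeeds with probability at least the constant $c$, a Chernoff bound shows that w.h.p.\ at least $n$ of the $e_i$ are boosters, so $H'_{6-out}\cup Q$ is Hamiltonian.

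The main obstacle is the expansion step, and in particular obtaining expansion up to a \emph{constant} fraction $\alpha n$. The $6$-out graph by itself expands only up to sets of size $\Theta(\delta^2 n)$, which would make boosters only $\Theta(\delta^4 n^2)$ and force $|Q|$ as large as $\Theta(n/\delta^4)$ --- far more than the stated bound. The resolution is to let $Q_1$ carry the expansion on the medium and large scales while the $6$-out edges secure minimum degree and small-set expansion; the calibration of $|Q_1|=\Theta(\theta n)$ against the entropy factor $\binom{n}{s}\binom{n}{2s}$ is exactly what produces the $\log(1/\delta)=\theta$ dependence, and matching the constants $81$ and $15$ to the expansion, connectivity and booster requirements is the delicate part of the write-up.
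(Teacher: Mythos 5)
Your proposal follows essentially the same route as the paper: split $Q$ into $Q_1\cup Q_2$, use the $6$-out edges for small-set expansion and $Q_1$ for linear-set expansion and connectivity (with the same union-bound calibration of $|Q_1|=\Theta(\theta n)$ against the entropy term, which the paper carries out as two separate claims with thresholds $\delta^2n/200$ and $n/5$), and then run P\'osa's rotation--extension on $Q_2$ revealed one edge at a time with a Chernoff bound on the number of boosters. The argument is correct and matches the paper's proof of Lemmas \ref{koutproperties} and \ref{clasicHam} in both structure and the source of every constant.
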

To get Theorem \ref{HamRainbow} from Theorem \ref{Ham} we use a result of Ferber et al given in \cite{FKMS} and stated as Theorem \ref{auxiliary} below. We use it in order to extract $t$ rainbow subgraphs from $H'$ each of which has the properties of $H_{6-out}$ needed in the proof of Theorem \ref{Ham}. And each of these subgraphs can become hamiltonian by adding to it $(81+15\theta)n$ random edges. We then argue that by suitably refining those edges the Hamilton cycles will be rainbow.  
\vspace{3mm}
\\ The following Theorem concerns the rainbow connectivity of  $G^r_{H,m}$.
\begin{thm}\label{RainbowConnectivity}
For rainbow connectivity the following holds:
\begin{description}
\item[(i)] If $r=3$ and $m\geq 60\delta^{-2}\log n$ then  w.h.p.\@ $G^r_{H,m}$ is rainbow connected.
\item[(ii)] For $\delta \leq 0.1$ there exist $H\in \mathcal{G}(n,\delta)$ such that if $m \leq  0.5 \log n$  then w.h.p. $G^4_{H,m}$ is not rainbow connected.
\item[(iii)] If $r=7$ and $m=\omega(1)$ then w.h.p.\@   $G^r_{H,m}$ is rainbow connected.
\end{description}
\end{thm}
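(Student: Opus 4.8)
\emph{Overview.} In all three parts a rainbow path uses at most $r$ distinct colours, hence has at most $r$ edges, so each statement concerns only paths of length $\le r$. Parts (i) and (iii) are positive results that I would prove by producing many short paths and exploiting the random colouring; (ii) is a lower bound that I would prove by a second moment argument showing that \emph{some} pair has no short path at all.

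\emph{Part (i).} Fix a pair $u,v$. Since $\delta_{\min}(H)\ge \delta n$ we have $|N(u)|,|N(v)|\ge \delta n$. The plan is to show that, simultaneously for all pairs, $G_{H,m}$ contains $\ge 8\log n$ internally disjoint $u$–$v$ paths of length $\le 3$ (each is either a common neighbour of $u,v$, or a pair $x\in N(u),\,y\in N(v)$ with $xy\in E(H)\cup R$). Either $H$ already has $\Omega(\delta^2 n^2)$ edges between $N(u)$ and $N(v)$, giving an $H$-matching of size $\Omega(\delta^2 n)\gg \log n$; or there are $\Omega(\delta^2n^2)$ non-edges between them, on which the expected number of the $m\ge 60\delta^{-2}\log n$ random edges is $\ge 60\log n$, and a Chernoff bound together with a union bound over small vertex covers (via K\"onig's theorem) yields a matching of size $\ge 8\log n$ except with probability $o(n^{-2})$. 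The resulting paths are edge-disjoint, so their colourings are independent and each is rainbow with probability $\ge 2/9$; hence the chance none is rainbow is at most $(7/9)^{8\log n}=o(n^{-2})$, and a union bound over all $\binom n2$ pairs finishes (i). The constant $60\delta^{-2}$ is chosen precisely so both union bounds beat $n^{-2}$.

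\emph{Part (iii).} With $r=7$ the budget is long enough to route through an $O(1)$-sized cluster decomposition. I would pick a maximal set of centres $u_1,\dots,u_k$ at pairwise $H$-distance $\ge 3$; their neighbourhoods are disjoint and of size $\ge \delta n$, so $k\le 1/\delta=O(1)$, and by maximality the radius-$2$ balls $C_i=N^{\le 2}(u_i)$ cover $V$. For each of the $O(1)$ centre pairs $(i,j)$, either $H$ has $\Omega(\delta^2n^2)$ edges between $N(u_i)$ and $N(u_j)$, or it has that many non-edges there; in the latter case, since $m=\omega(1)$ exceeds the relevant constant threshold, w.h.p.\ some random edge $xy$ has $x\in N(u_i),\,y\in N(u_j)$. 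Either way, for every $u\in C_i,\ v\in C_j$ there is a path $u\leadsto u_i\to x\to y\to u_j\leadsto v$ of length $\le 2+1+1+1+2=7$. Finally $\delta_{\min}(H)\ge \delta n$ gives $\Omega(n)$ internally disjoint choices for the two length-$\le 3$ ends, so one can first select a rainbow $u$–$x$ segment avoiding $c(xy)$ and then a $y$–$v$ segment avoiding the $\le 4$ colours already used; each choice fails with probability $e^{-\Omega(n)}$, and a union bound over all pairs gives rainbow connectivity w.h.p.

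\emph{Part (ii), the main obstacle.} Here $r=4$ caps rainbow paths at length $4$, whereas a graph of minimum degree $\delta n$ can have diameter $\approx 3/\delta\ge 5$. I would take $H$ to be a blow-up of a cycle $C_L$: split $V$ into $L$ equal cyclic blocks and join each block completely to itself and its two neighbours, with $10\le L\le 3/\delta$ (possible since $\delta\le 0.1$) so that $\delta_{\min}(H)\ge\delta n$ and the cyclic diameter exceeds $4$. A constant fraction of pairs $(u,v)$ then sit in blocks at cyclic distance $\ge 5$, so they have no length-$\le 4$ path in $H$, and any such path in $G_{H,m}$ must use a random edge joining a block near $u$ to a block near $v$. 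The number of such ``shortcut'' slots is $|\mathrm{Sh}(u,v)|=\Theta(n^2/L^2)$, whence $\p{(u,v)\text{ has no length-}{\le}4\text{ path}}=(1-|\mathrm{Sh}|/M_0)^{m}=n^{-c}$ with $c<2$, using $m\le 0.5\log n$ and $L$ large; here $M_0=|\binom{[n]}2\setminus E(H)|$. The difficulty is that a \emph{fixed} far pair is w.h.p.\ fine, so one cannot exhibit a single bad pair; instead I would study the number $Z$ of far pairs admitting no length-$\le 4$ path. Then $\E{Z}=\Theta(n^{2-c})\to\infty$, and the crux is the second moment: showing $\E{Z^2}=(1+o(1))\E{Z}^2$, which requires the near-independence of the events for pairs with disjoint shortcut regions, control of the conditioning on exactly $m$ random edges, and a check that two-shortcut (and longer) paths contribute only lower-order terms. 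This gives $Z\ge 1$ w.h.p., and since a bad pair has no path of length $\le 4$ it has no rainbow path under \emph{any} colouring, so $G^4_{H,m}$ is not rainbow connected. I expect this second-moment computation to be the main technical obstacle.
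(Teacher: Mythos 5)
Your part (i) is essentially the paper's argument (produce $\Omega(\log n)$ internally disjoint $u$--$v$ paths of length at most $3$ through $N(u)$ and $N(v)$, then use that each is rainbow with probability $2/9$ and union bound), and it is correct; the dichotomy you add between ``many $H$-edges'' and ``many non-edges'' between the neighbourhoods is a reasonable way to make the path count rigorous. The genuine problems are in parts (ii) and (iii).

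For (ii), the cycle blow-up cannot work: the second-moment step is not merely ``the main technical obstacle'' but is false for this $H$. In the blow-up, the set of vertices reachable from $u$ in at most $a\ge 1$ steps is (up to the single vertex $u$ itself) the union of the $2a+1$ blocks nearest to $u$'s block, so it depends only on the block containing $u$. Hence all $\Theta(n^2)$ far pairs $(u,v)$ lying in a fixed pair of blocks have essentially the same shortcut region, the events ``no length-$\le 4$ path'' coincide across each block-pair, and $Z$ is in effect a sum of $O(1)$ indicators each scaled by $\Theta(n^2)$. Each indicator equals $1$ with probability $n^{-c}=o(1)$, so $\Pr(Z>0)=o(1)$: w.h.p.\ \emph{every} far block-pair receives a shortcut and every pair of vertices has a path of length at most $4$. (Equivalently, $\mathbf{E}[Z^2]\ge \Theta(n^{4-c})\gg \mathbf{E}[Z]^2=\Theta(n^{4-2c})$, so the second moment method gives nothing.) The paper's example is two disjoint copies of $G(n/2,0.22)$: every $V_1$--$V_2$ path must use an edge of $R$, $R$ has only $O(\log n)$ endpoints, and because $H$ is \emph{random} the closed neighbourhoods $\{v\}\cup N(v)$ avoid this endpoint set essentially independently, each with probability at least $n^{-1/2}$, so w.h.p.\ some $v\in V_1$ and $u\in V_2$ are at distance at least $5$, and no path of length at least $5$ can be rainbow with $4$ colours. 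Some such irregularity or randomness in $H$ is needed so that distinct vertices impose genuinely different demands on $R$.

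For (iii), your choice of hubs is the gap. A maximal set of centres at pairwise distance at least $3$ only guarantees that each $v$ has \emph{at least one} common neighbour with some centre $u_i$; minimum degree $\delta n$ gives no lower bound beyond $1$ on $|N(v)\cap N(u_i)|$. Your final step needs $\Omega(n)$ internally disjoint length-$2$ connections from $v$ to its hub so that the probability of failing to realise the prescribed colours on the two end segments is $e^{-\Omega(n)}$, which is what survives the union bound over $n^2$ pairs; with a single connecting vertex the failure probability is a constant and the union bound collapses. The paper's Lemma \ref{connections} is designed exactly to repair this: a maximal set $S$ whose members have pairwise \emph{small} common neighbourhoods satisfies $|S|\le 2/\delta$ and, by maximality, every $v$ has some $s_v\in S$ with $|N(v)\cap N(s_v)|\ge \delta^2 n/4$. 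With that lemma in place of your centre construction, your outline for (iii) matches the paper's proof.
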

The rest of the paper is divided as follows. In Section 2 we give the proof of Theorem \ref{HamRainbow} and in Section 3 we give the proof of Theorem \ref{RainbowConnectivity}. We close with Section 4.
\section{Proof of Theorem \ref{HamRainbow}}
\subsection{Proof of Theorem \ref{Ham}}
We first split $Q$ into two sets $Q_1,Q_2$ of sizes $(45+15 \theta)n$ and $36n$ respectively. We then show that $H_{6-out}'\cup Q_1$ is connected and has good expansion properties. Then we apply a standard P\'osa rotation argument to show that $E(H')\cup Q_1\cup Q_2$ spans a Hamilton cycle. 
\begin{lem}\label{koutproperties}
With probability $1-o(n^{-2})$, the following hold: 
\begin{enumerate}
\item $H_{6-out}'\cup Q_1$ is connected,
\item for every $S\subseteq V$ such that $|S|\leq n/5$ we have 
that $|N_{H_{6-out}'\cup Q_1}(S)|>2|S|$. 
\end{enumerate}
\end{lem}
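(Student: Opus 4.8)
The plan is to prove both statements by a union bound over potential ``bad'' vertex sets, exposing the $6$-out edges of $H'$ and the random set $Q_1$ separately and exploiting their independence. I would treat the expansion property (item 2) first, since connectivity (item 1) then follows with only one additional range to check.

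For item 2, fix $s$ with $1\le s\le n/5$ and suppose some $S$ with $|S|=s$ has $|N(S)|\le 2s$. Then all edges incident to $S$ lie inside a set $T\supseteq S$ with $|T|\le 3s$, so in particular every $6$-out edge emanating from a vertex of $S$ and every $Q_1$ edge meeting $S$ must stay in $T$. Choosing $S$ and the extra vertices of $T$ costs a factor $\binom{n}{s}\binom{n}{2s}\le (en/s)^{3s}$. I would then bound the absorption probability in two regimes. For small $s$ I use only the $6$-out edges: since $\delta(H')\ge \delta n/21$, each of the $6s$ out-edges lands in $T$ with probability at most $3s/\delta(H')\le 63s/(\delta n)$, giving a factor $(63s/(\delta n))^{6s}$; combined with the entropy this yields a term at most $[\,c\,\delta^{-6}(s/n)^3\,]^s$ for an absolute constant $c$, which is below $1$ (and $O(n^{-3})$ at $s=1$) exactly while $s\lesssim \delta^2 n$. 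For the complementary range $\delta^2 n\lesssim s\le n/5$ I use only $Q_1$: the cut between $S$ and $V\setminus T$ contains at least $s(n-3s)\ge 2sn/5$ pairs, so the probability that $Q_1$ avoids it is at most $\exp\!\big(-|Q_1|\cdot(2sn/5)/\binom{n}{2}\big)\le \exp(-(36+12\theta)s)$.

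The delicate point, and the step I expect to be the main obstacle, is making the union bound converge in this intermediate range, where the entropy factor $(en/s)^{3s}$ is largest relative to the probability gain. Here the restriction $s\gtrsim \delta^2 n$ is precisely what saves us: it forces $\log(n/s)=O(\theta)$, so the entropy exponent is $3s\log(en/s)=O(\theta s)$, and the choice $|Q_1|=(45+15\theta)n$ is calibrated so that the $Q_1$ decay $(36+12\theta)s$ dominates it with room to spare. Summing the two regimes, the failure probability for item 2 is dominated by the $s=1$ term and is $O(n^{-3})=o(n^{-2})$.

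Finally, item 1 follows by adding one more range. If $H'_{6-out}\cup Q_1$ were disconnected it would contain a vertex set $S$ with $1\le |S|\le n/2$ and no edge leaving it. For $|S|\le n/5$ this contradicts item 2, which gives $|N(S)|>2|S|>0$. For $n/5<|S|\le n/2$ the cut has at least $n^2/10$ pairs, so the probability that $Q_1$ misses it entirely is at most $\exp(-|Q_1|/5)=\exp(-(9+3\theta)n)$, and a union bound over the at most $2^n$ such sets remains $e^{-\Omega(n)}$. Hence both items hold simultaneously with probability $1-o(n^{-2})$, as claimed.
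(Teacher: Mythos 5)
Your proof is correct, and for the expansion property (item 2) it is essentially the paper's argument: small sets are handled by the $6$-out edges alone (with the same $\binom{n}{s}\binom{n}{2s}$ entropy count and a per-vertex absorption probability of order $(s/\delta n)^6$), large sets by $Q_1$ alone, and you correctly identify that the crossover scale $s=\Theta(\delta^2 n)$ makes $\log(n/s)=O(\theta)$ so that the $\theta$-dependent size of $Q_1$ beats the entropy. Where you genuinely diverge is connectivity. The paper deduces from its small-set claim that every component of $H'_{6-out}$ has size at least $\delta^2 n/200$, so there are at most $200\delta^{-2}$ components, then shows (via a separate Chernoff argument that no two large sets span too many $H'$-edges) that $Q_1$ joins every pair of components. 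You instead run a direct union bound over all $2^n$ cuts with $n/5<|S|\le n/2$, which is simpler and avoids the component-counting step entirely; the margin $2^n e^{-(9+3\theta)n}$ is ample. One small point in both your large-set and cut estimates: since $Q_1$ is drawn from pairs \emph{outside} $E(H')$, the pairs of the cut already present in $H'$ are unavailable to $Q_1$, so you should subtract up to $\Delta(H')|S|\le |S|n/19$ from the cut count, as the paper does in its Claim 2; this only perturbs constants and does not affect either of your conclusions.
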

\begin{proof}
We start by examing the second property for ``small" sets.
\vspace{3mm}
\\ \emph{Claim 1:} With probability $1-o(n^{-2})$  every $S\subset V$ such that $|S|\leq \delta^2n/200$ satisfies
 \\$|N_{H_{6-out}'}(S)| >2|S|.$
\vspace{3mm}
\\ \emph{Proof of Claim 1:} Let $S,T\subset V$ be such that $|T|=2|S|$ and 
$|S|\leq \delta^2 n/200$. In $H'$ every vertex in $S$ has at most $3|S|\leq 3\delta^2 n/200$ out neighbors in $S\cup T$. Furthermore observe that given $H'$ every set of 6 edges adjacent to $v$ in $H$ is equaly likely to be chosen by $v$ during the constraction of $H_{6-out}'$.
Thus 
$$\pr(N_{H_{6-out}'}(v)\subseteq S\cup T) \leq \binom{3|S|}{6}\bigg \backslash \binom{\delta n}{6}\leq \bigg(\frac{3|S|}{\delta n} \bigg)^6.$$      
Therefore
\begin{align*}
\pr(\text{Claim 1 is violated}) &\leq \sum_{s=1}^{\delta^2 n/200}\binom{n}{s}\binom{n}{2s} \bigg(\frac{3s}{\delta n} \bigg)^{6s}\\
&\leq \sum_{s=1}^{\delta^2 n/200}\bigg(\frac{en}{s}\bigg)^s\bigg(\frac{en}{2s}\bigg)^{2s} \bigg(\frac{3s}{\delta n} \bigg)^{6s}\\
&\leq \sum_{s=1}^{\delta^2 n/200}\bigg(\frac{e^3\cdot3^6s^3}{\delta^6 n^3}\bigg)^s=o(n^{-2}).
\end{align*}
{\parindent 0in\emph{End of proof of Claim 1.}}

\vspace{3mm}
Now we examine the second property for ``large" sets.  Here we are going to use the edges from $Q_1$. Let $\Delta(H')$ be the maximum degree of $H'$. Then $\Delta(H')\leq n/19$ w.h.p.

\vspace{3mm}
\emph{Claim 2:}  With probability $1-o(n^{-2})$  every $S\subseteq V$ such that
 $ \delta^2n/200< |S| \leq n/5$ satisfies $|N_{H_{6-out}'\cup Q_1}(S)| >2|S|$.
\vspace{3mm}
\\ \emph{Proof of Claim 2:} For 
$\delta^2n/200\leq |S| \leq n/5$ by considering only the edges in $Q_1$ we have 
\begin{align*}
\pr(\text{Claim 1 is violated})
&\leq \sum_{s=\frac{\delta^2n}{200}}^{\frac{n}{5}} \binom{n}{s}\binom{n}{2s} \bigg(1-\frac{s(n-3s)- \Delta(H')\cdot s }{\binom{n}{2}} \bigg)^{|Q_1|}\\
&\leq  \sum_{s=\frac{\delta^2n}{200}}^{\frac{n}{5}} \bigg( \frac{en}{s}\bigg)^{s} \bigg( \frac{en}{2s}\bigg)^{2s} e^{-\frac{s(n-3s-\Delta(H'))|Q_1|}{\binom{n}{2}}}\\
&\leq \sum_{s=\frac{\delta^2n}{200}}^{\frac{n}{5}} 
\bigg( \frac{e^3n^3}{4s^3}e^{-\frac{2|Q_1|}{5n}}\bigg)^{s}  
\leq \sum_{s=\frac{\delta^2n}{200}}^{\frac{n}{5}} 
\bigg( \frac{e^3\cdot 200^3}{4\delta ^6}e^{-(18+6\theta)}\bigg)^{s}=o(n^{-2}).  
\end{align*}
Claims 1 and 2 imply the second property of our Lemma. At the same time Claim 1 implies  that every connected component of $H_{6-out}'$ has size at least $\frac{\delta^2n}{200}$.
In the event that any two of the at most $200\delta^{-2}$ components of $H_{6-out}'$ are connected by an edge in $Q_1$ we have that $H_{6-out} \cup Q_1$ is connected. 
Observe for any two disjoint sets $S_1,S_2$ of size at least $\delta^2 n/200$, the Chernoff bounds imply that 
$$\pr (S_1 \cup S_2 \text{ span } \geq S_1||S_2|/10 \text{ edges in } H') =\exp \set{-O(\delta^2 n^2)}.$$
Since there are at most $2^n$ choices for each of $S_1,S_2$ we have that w.h.p\@ every pair of components $S_1,S_2$ of $H_{6-out}'$ spans at most $|S_1||S_2|/10$ edges. Therefore
\begin{align*}
\pr(H_{6-out}'\cup Q_1 \text{ it not connected })
\leq \binom{200\delta^{-2}}{2} \bigg(1-\frac{9}{10}\cdot\bfrac{\delta^2n}{200}^2\cdot\frac{1}{\binom{n}{2}}\bigg)^{|Q_1|}
=o(n^{-2}).
\end{align*}
\end{proof}
Our next Lemma builds on Lemma \ref{koutproperties} and completes the proof of Theorem \ref{Ham}.  It is basically an adaptation of P\'osa's argument to our setting. 
\begin{lem}\label{clasicHam}
$$\pr(H_{6-out}'\cup Q_1 \cup Q_2\text{ is not Hamiltonian})=o(n^{-2}).$$
\end{lem}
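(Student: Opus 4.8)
The plan is to run the standard P\'osa rotation--extension scheme together with a booster count, taking Lemma \ref{koutproperties} as the deterministic input. Throughout I condition on the event that $G_0 := H_{6-out}'\cup Q_1$ is connected and satisfies $|N_{G_0}(S)|>2|S|$ for all $|S|\le n/5$; by Lemma \ref{koutproperties} this event fails with probability $o(n^{-2})$, so it suffices to bound the conditional failure probability by $o(n^{-2})$. Both connectivity and the expansion bound are monotone under edge addition, so every graph $G$ with $G_0\subseteq G\subseteq G_0\cup Q_2$ inherits them. I would reveal $G_0$ first and then expose $Q_2$.

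First I would recall the notion of a booster: a non-edge $e$ of a graph $G$ is a \emph{booster} if $G\cup\{e\}$ is Hamiltonian or has a strictly longer longest path than $G$. The key structural claim is that any connected, non-Hamiltonian graph $G$ satisfying $|N_G(S)|>2|S|$ for all $|S|\le n/5$ has at least $n^2/50$ boosters. To see this, fix a longest path $P$ and one endpoint $a$; P\'osa's rotation lemma implies that the set $X$ of endpoints of longest paths obtainable by rotations with $a$ held fixed satisfies $|N_G(X)|\le 2|X|$, so the strict expansion hypothesis forces $|X|>n/5$. Repeating the rotation from each $b\in X$ (now holding $b$ fixed) produces a set $Y_b$ of reachable endpoints with $|Y_b|>n/5$. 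For $b\in X$ and $c\in Y_b$ there is a longest path with endpoints $b,c$; if $\{b,c\}$ were an edge it would close a cycle on that path, and since $G$ is connected and not Hamiltonian this cycle would extend to a path longer than $P$, a contradiction. Hence every such $\{b,c\}$ is a non-edge, and adding it closes the path into a cycle that either is Hamiltonian or extends by connectivity to a longer path; thus each $\{b,c\}$ is a booster. Counting these pairs with multiplicity at most two gives at least $\tfrac12\cdot(n/5)\cdot(n/5)=n^2/50$ boosters.

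Next I would expose the edges of $Q_2$ one at a time as $e_1,\dots,e_{36n}$. Since $Q_2$ is distributed as a uniformly random subset of $\binom{[n]}{2}$ of size $36n$, conditioned on $e_1,\dots,e_{i-1}$ the next edge $e_i$ is uniform over the remaining pairs. Whenever $G_{i-1}:=G_0\cup\{e_1,\dots,e_{i-1}\}$ is not yet Hamiltonian it is connected and expanding, hence has at least $n^2/50$ boosters, all of which are non-edges of $G_{i-1}$ and therefore still unexposed; consequently $e_i$ is a booster with conditional probability at least $(n^2/50)/\binom{n}{2}\ge 1/25$. Each booster that does not complete a Hamilton cycle strictly increases the length of the longest path, which can happen at most $n$ times, so once $n$ of the $e_i$ are boosters the graph is Hamiltonian. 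Writing $Y$ for the number of booster steps, $Y$ stochastically dominates a $\mathrm{Bin}(36n,1/25)$ variable, whose mean $1.44n$ exceeds $n$; a Chernoff bound then gives $\pr(Y<n)\le \exp(-\Omega(n))=o(n^{-2})$. Combining this with the $o(n^{-2})$ conditioning from Lemma \ref{koutproperties} yields the claimed bound.

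The main obstacle is the booster-counting step: making the double-rotation argument rigorous — tracking that all endpoint pairs produced are genuine non-edges, and checking that the expansion threshold $n/5$ is exactly what converts P\'osa's lemma into $|X|>n/5$ — is the delicate part, as is justifying the sequential conditioning so that the booster indicators dominate a clean binomial despite sampling without replacement. Everything after that is a routine Chernoff estimate.
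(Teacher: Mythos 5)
Your proposal is correct and follows essentially the same route as the paper: P\'osa rotations plus the expansion from Lemma \ref{koutproperties} give $\Omega(n^2)$ booster pairs, the edges of $Q_2$ are exposed sequentially so that the number of boosters hit dominates a binomial with mean exceeding $n$, and Chernoff finishes. The only (immaterial) difference is that the paper additionally discards booster pairs lying in $E(H')$ (using $\Delta(H')\le n/19$, giving $n^2/70$ boosters and success probability $1/35$ per edge instead of your $n^2/50$ and $1/25$), since $Q_2\subseteq E(H'')\cup R$ cannot contain edges of $H'$; with that correction the bound $36n/35>n$ still closes the argument.
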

\begin{proof}
Let $Q_2=\{e_1,...,e_{|Q_2|}\}$ and set $G_i=(H_{6-out}\cup Q_1)\cup\set{e_1,...,e_i}$.
Assume that $G_i$ is not Hamiltonian and  consider a longest path $P_i$ in $G_i,i\geq 0$. Let $x,y$ be the end-vertices of $P$. Given $yv$ where $v$ is an interior vertex of $P_i$ we can obtain a new longest path $P_i' = x..vy..w$ where $w$ is the neighbor of $v$ on $P_i$ between $v$ and $y$. In such a case we say that $P_i'$ is obtained from $P_i$ by a rotation with the endpoint $x$ being the fixed end-vertex.

Let $END_i(x;P_i)$ be the set of end-vertices of longest paths of $G_i$ that can be obtained from $P_i$ by a sequence of rotations that keep $x$ as the fixed end-vertex. Thereafter for $z\in END_i(x;P_i)$ let $P_i(x,z)$ be a path that has end-vertices $x,z$ and can be obtain form $P_i$ by a sequence of rotations that keep $x$ as the fixed end-vertex. Observe that since $G_i$ is connected but not Hamiltonian for $z\in END_i(x;P_i)$ and $z'\in END_i(z;P_i(x,z))$ neither $xz$ nor $zz'$ belong to $G_i$ since otherwise we can close the path into a cycle that is not Hamiltonian and then use the connectivity of $G_i$ to get a longer path than $P$. At the same time it follows from P\'osa \cite{Posa} 
$$|N(END_i(x,P_i))| < 2|END_i(x,P_i)|.$$
Moreover for every $z\in END_i(x;P_i)$
$$|N(END_i(z,P_i(x,z)))|< 2|END_i(z,P_i(x,z))|.$$
As a consequence, since Lemma \ref{koutproperties} states that $\forall S\subset V$ $|S|\leq n/5$ we have $2|S|\geq N_{H_{6-out} \cup Q_1}(S)$, we get that $|END_i(x,P_i)| \geq \frac{n}{5}$.
\vspace{3mm}
\\Let $E_i=\set{\set{z,z'}\notin H': z\in END_i(x;P_i) \text{ and } z'\in END_i(z;P_i(x,z))}$. $H'$ has maximum degree $n/19$. Furthermore $|END(x,P_i)| \geq n/5$ and for every $z\in END_i(x,P_i)$ we have $|END_i(z;P_i(x,z))| \geq n/5$. Hence 
$$|E_i|\geq \frac{1}{2}\cdot \frac{n}{5}\bigg( \frac{n}{5}-\frac{n}{19}\bigg) \geq \frac{n^2}{70}.$$
Now let $Y_{i+1}$ be the indicator that $e_{i+1}\in E_i$ and $Z=\sum_{i=1}^{|Q_2|} Y_i$. Then $\pr (Y_i=1) \geq 1/35$. Therefore $Z$ dominates the binomial $Bin(|Q_2|,1/35)$. Thus since $|Q_2|/35=36n/35$, the  Chernoff bound implies that $\pr(Z\leq n)=e^{-\Omega(n)}=o(n^{-2})$. Hence $G_{|Q_2|}=H_{6-out}'\cup Q_1 \cup Q_2$ is Hamiltonian with the required probability.
\end{proof}
\subsection{Partitioning $H'$}
We are now ready to apply Theorem \ref{auxiliary}, given below and partition $H'$ into $t+1$ edge disjoint subgraphs. $t$ of them will be rainbow and will satisfy the property discribed in Theorem \ref{Ham}.  The final step will be to show that for each $i\in [t]$ we can set aside a random subset $R_i\subseteq R$, of size $(81-15\delta)n$ such that $H_i \cup R_i$ is rainbow.
\begin{thm}[\cite{FKMS}]\label{auxiliary} 
Let $\epsilon>0$ be a constant, $k\geq 2$ be an integer and $\mathcal{P}$ be a monotone increasing graph property. Let $F$ be a graph on $n$ vertices with minimum
degree $\delta(F)=\omega(\log n)$ whose edges are colored independently and uniformly at random from $[kn]$. Then, w.h.p.\ $F$ can be partitioned into $F=F_0\cup F_1\cup\dots\cup F_r$ such that the following holds.
  \begin{enumerate}
    \item $F_0,F_1,\ldots,F_r$ are edge-disjoint subgraphs of $F$,
    \item $r=\left(1-\varepsilon\right)\frac{\delta(F)}{2k}$,
   	\item For every $1\leq i\leq r$, $E(F_i)$ is rainbow and of size at most $kn$, and
    \item For every $1\leq i\leq r$, $\Pr\left[F_{k\text{-out}}
    \text{ satisfies } \mathcal{P}\right]\leq \Pr\left[F_i\text{ satisfies } \mathcal{P}\right]+n^{-\omega(1)}$.
  \end{enumerate}
\end{thm}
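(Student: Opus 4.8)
The plan is to produce the partition by a single randomized assignment of edges to indices and then, for each $i\geq 1$, to compare the law of the \emph{uncoloured} graph $F_i$ with that of $F_{k\text{-out}}$. Since $\mathcal{P}$ is a monotone graph property, it suffices to show that the uncoloured law of $F_i$ stochastically dominates that of $F_{k\text{-out}}$ up to an additive $n^{-\omega(1)}$ error: this is precisely property (4), because then $\Pr[F_i\in\mathcal{P}]\geq \Pr[F_{k\text{-out}}\in\mathcal{P}]-n^{-\omega(1)}$. The remaining properties (1)--(3) are structural and should fall out of the construction together with standard concentration.

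First I would expose the random colouring $c$ and record that, since $\delta(F)=\omega(\log n)$, every vertex is incident with $\omega(\log n)$ edges whose colours are i.i.d.\ uniform on $[kn]$; in particular each colour class is well spread and no vertex sees any single colour too often. I would then build $F_1,\dots,F_r$ by scanning the edges of $F$ in a uniformly random order and assigning each edge $e=uv$ to an index $i\in[r]$ subject to two rules: $i$ must not already contain an edge of colour $c(e)$ (to keep $F_i$ rainbow), and both $u$ and $v$ must still have current degree below the target $k$ in $F_i$ (so that after the pass every vertex has degree close to $k$ in every $F_i$). Unplaceable edges, together with a leftover set, form $F_0$. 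Because there are $kn$ colours while each $F_i$ receives at most $kn$ edges, and because we consume only $\approx(1-\varepsilon)\delta(F)$ of each vertex's $\geq\delta(F)$ incident edges, the rainbow rule should reject only a negligible fraction of placements; taking $r=(1-\varepsilon)\delta(F)/(2k)$ then gives (2), rainbowness and the bound $|F_i|\leq kn$ give (3), and edge-disjointness is built in, giving (1).

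The heart of the argument is the coupling behind (4). I would aim to show that at each vertex $v$ the set of edges $F_i$ collects is, marginally, $n^{-\omega(1)}$-close to a uniform random $k$-subset of $v$'s incident edges, and that the joint law across vertices dominates independent uniform selection -- which is exactly the law of $F_{k\text{-out}}$. The natural coupling reveals a uniform $k$-out selection first and then checks that, with probability $1-n^{-\omega(1)}$, those edges can be absorbed into a rainbow $F_i$. The point requiring care is that $F_{k\text{-out}}$ is itself very far from rainbow (with $kn$ colours on $\sim kn$ edges it has many monochromatic pairs), so one cannot simply arrange $F_i\supseteq F_{k\text{-out}}$ edge-for-edge; the comparison must be made at the level of uncoloured laws, where the colour of a selected edge is irrelevant to $\mathcal{P}$ and only constrains which edges may coexist in a single $F_i$.

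The main obstacle is precisely this coupling step. The global rainbow requirement is the only thing linking the otherwise independent per-vertex selections, and it can bias a vertex's choice because an edge of a given colour may be frozen into $F_i$ on account of a distant vertex. Two things must be established: that these biases keep each per-vertex marginal $n^{-\omega(1)}$-close to uniform, so that genuine domination (not merely approximate equality) holds with the required error; and that the cross-vertex dependencies are weak enough to sum over all $n$ vertices. I expect this to be handled either by the Lov\'asz Local Lemma or by a concentration/Poissonization argument that genuinely exploits the independence and spread of the random colouring -- each of the $kn$ colours lies on $\Theta(|E(F)|/(kn))$ edges -- rather than any structure of $F$, and I expect the $(1-\varepsilon)$ slack in $r$ to be exactly what buys the room needed to drive the deviation probability down to $n^{-\omega(1)}$.
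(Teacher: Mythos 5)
This theorem is not proved in the paper at all: it is quoted verbatim from \cite{FKMS} and used as a black box (the ``proof'' in the source is a bare \verb|\qed|), so there is no in-paper argument to compare yours against. Judged on its own terms, your proposal is a plan rather than a proof, and the plan stops exactly at the step that constitutes the entire content of the theorem. Property (4) -- that the law of each $F_i$ dominates that of $F_{k\text{-out}}$ up to an additive $n^{-\omega(1)}$ -- is the whole difficulty, and you defer it to ``either the Lov\'asz Local Lemma or a concentration/Poissonization argument'' without carrying either out. You correctly observe that a genuine $k$-out is very far from rainbow (with $\approx kn$ edges and $kn$ colours it is rainbow only with probability $e^{-\Theta(n)}$), which means the rainbow constraint is a severe conditioning, not a mild perturbation; but that observation is precisely why the coupling cannot be waved through, and no mechanism for overcoming it is supplied. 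Relatedly, the claim that ``the rainbow rule should reject only a negligible fraction of placements'' is false as stated: each $F_i$ is meant to absorb close to $kn$ edges while only $kn$ colours exist, so late in the greedy pass almost every colour is already present in $F_i$ and the rejection rate tends to $1$.

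There is also a concrete error in the construction itself. Your assignment rule rejects $e=uv$ for index $i$ unless both $u$ and $v$ currently have degree below $k$ in $F_i$, which forces $\Delta(F_i)\le k$ deterministically. But ``contains a vertex of degree at least $k+1$'' is a monotone increasing property in the sense the paper uses, and $F_{k\text{-out}}$ satisfies it w.h.p.\ (a vertex keeps its own $k$ choices and is typically also chosen by several neighbours, since each neighbour $u$ selects $uv$ with probability $k/d(u)$). For that property your construction gives $\Pr[F_i\in\mathcal{P}]=0$ while $\Pr[F_{k\text{-out}}\in\mathcal{P}]=1-o(1)$, so property (4) fails outright. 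The cap must be placed on the number of edges each vertex \emph{claims} for $F_i$ (mimicking the $k$ out-choices), not on the resulting degree; with that repair the skeleton of your construction resembles the natural route one would take, but the domination argument for (4) and the analysis of the rainbow rejection rate would still have to be done, and neither is present.
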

\qed

By monotone, we mean here that if $F_{k-out}$ satisfies $\mathcal{P}$ then adding edges to $F_{k-out}$ gives a graph that also satisfies $\mathcal{P}$.

We apply the above Theorem with $F=H'$ and $r=t,k=6$ and $\mathcal{P}$ the property that the addition of $(81+15\delta)n$ random edges from $\binom{[n]}{2}\setminus H'$ makes the graph Hamiltonian with probability at least $1-n^{-2}$. It follows from Theorem \ref{auxiliary} and Lemma \ref{clasicHam} that we can finish our proof by showing that w.h.p. we can pair each of the $H_i,i\in[t]$ with a random subset $Q_i\subseteq E(H'')\cup R$ of size $(81+15\delta)n$ such that each $H_i\cup Q_i$ is rainbow. Furthermore, we will color $H'$ using more than $6n$ colors and this will not invalidate the use of Theorem \ref{auxiliary}.
\subsection{Partitioning $H'' \cup R$}
We can assume that $m=\min \set{ (435+75\theta)tn, \bar{m}_H},\bar{m}_H= \big|\binom{[n]}{2} \setminus E(H)\big| \big\}$. We start by extracting $t$ disjoint sets from $E(H'') \cup R $. We choose $m|E''|/\bar{m}_H$ edges uniformly at random from $E(H'')$ and add them to $R$. Let $\{e_1,...,e_{m'}\}$ be a random permutation of these edges. Since w.h.p.\@ $H'$ has maximum degree $\delta n/19$ we have that $m' \geq  (435+75\theta)tn$. Furthermore $\{e_1,...,e_{m'}\}$ is distributed as a random subset of $\binom{[n]}{2}\setminus E'$ os size $m'$. We let $Q_i'= \{e_{(435+75\theta)in+1},...,e_{(435+75\theta)(i+1)n}\}.$ It follows from Lemma \ref{clasicHam} that any subset $A_i$ of $Q_i'$ that satisfies: i) $A_i \cup H_i'$ is rainbow and ii) $|A_i'|=(81+15\theta)n$ satisfies the requirements for $Q_i$. In the case that at least $(87+15\theta)n$ colors appear in $Q_i'$ such a set exists. (The extra $6n$ needed for Lemma \ref{clasicHam} deals with the colors of $H_i$). Finally the probability that fewer colors appear is bounded by
\begin{multline*}
\binom{r}{(87+15\theta)n} \bigg(\frac{(87+15\theta)n}{r}\bigg)^{(435+75\theta)n}
\\ \leq  \bigg(\frac{er}{(87+15\theta)n}\bigg)^{(87+15\theta)n}  \bigg(\frac{(87+15\theta)n}{r}\bigg)^{(435+75\theta) n}
\\ =e^{(87+15\theta)n} \bigg(\frac{(87+15\theta)n}{(120+20\theta)n}\bigg)^{(348+60\theta)n} \leq \bigg[ e \bfrac{3}{4}^4\bigg]^{(87+15\theta)n} =o\bigg(\frac{1}{n}\bigg).
\end{multline*}   
This completes the proof of Theorems \ref{HamRainbow} and \ref{Ham}.\\
\qed 
\section{Proof of Theorem \ref{RainbowConnectivity}}
\subsection{Proof of Theorem \ref{RainbowConnectivity} (i)}
We will show that if $R$ is large enough then w.h.p., for any pair of vertices $u,v \in V$ there are many edges in $R$ between their neighborhoods $N(u),N(v)$. It will follow that w.h.p. there is a rainbow path of length 3 from $u$ to $v$.
\vspace{3mm}
\\Let $R=\set{r_1,...,r_m}$. Let $\cC$ be the event that $G$ is rainbow connected. For $u,v\in V$ let $\cC_3 (u,v)$ be the event that there exists a rainbow path $u,u_0,v_0,v$ with $\set{u,u_0},\set{v,v_0}\in E(H)$ and $\set{u_0,v_0}\in R$. Furthermore let $B(u,v)$ be the event that there exist fewer than $10 \log n$ such paths in $G$. Given $r_1,...,r_{i-1}$ either there exist $10\log n$ such paths or $r_i$ creates such a path with probability at least $ \delta n(\delta n-10\log n)/ \binom{n}{2} \geq \delta^2.$
Therefore the Chernoff bound implies that
\begin{align*}
\pr ( B(u,v))&\leq \pr(Binomial(60\delta^{-2}\log n,\delta^2)<10\log n)
\leq \exp\set{- \frac12 \cdot \frac19\cdot 60\log n}\leq n^{-3}.
\end{align*}
Observe that a path of length 3 in $G$ is rainbow with probability $1\cdot \frac{2}{3}\cdot \frac{1}{3}=\frac{2}{9}$ since we may assign any color to its first edge, then any of the other two colors to its second edge and finally the remaining color to its third edge. Thus
\begin{multline*}
\pr(\urcorner \cC)\leq \sum_{u,v\in V:u\neq v} \pr(\urcorner	\cC(u,v) )
\leq \sum_{u,v\in V:u\neq v}
\pr(	B(u,v) ) +\pr(\urcorner	\cC(u,v)|\urcorner B(u,v) )\\
\leq \sum_{u,v\in V:u\neq v} \brac{n^{-3}  + \bigg(1-\frac{2}{9}\bigg)^{10 \log n}} 
\leq n^2 \big(n^{-3} +n^{-20/9} \big)=o(1).
\end{multline*}
\subsection{Proof of Theorem \ref{RainbowConnectivity} (ii)}
Our counterexample will consist of 2 disjoint copies of $G(0.5n,p)$ with $p=0.11$. We will show that if $|R|$ is not sufficiently large then it will not cover every vertex in the neigborhoods of some vertices in either copies. 
\vspace{3mm}
\\Let $\delta\leq 0.1$. Partition $V$ into 2 sets $V_1,V_2$ each of size $0.5n$.
Then generate $H$ by including in $E(H)$ every edge in $V_1\times V_1$ or in $V_2\times V_2$ independently with probability $0.22$. Since $0.22\cdot 0.5n= 0.11n$, the  Chernoff bounds imply that for all $v\in V$ the degree of $v$, $d(v)$ satisfies $0.1n<d(v)<0.12n$. In particular w.h.p. $H\in \cG(n,0.1)$.

In the case that $\exists v\in V_1$ and $u\in V_2$ such that no edge in $R$ has an endpoint in each of  $(\{v\}\cup N(v))\times(\{u\}\cup N(u))$ then $u$ and $v$ are at distance at least 5 in $G$. Since any such path cannot be rainbow when is colored by four colors we have that $G$ is not rainbow connected.

Observe that w.h.p. $R$ covers sets $R_1\subset V_1$ and $R_2\subset V_2$ each of size at most $\log n$. Therefore a vertex in $V_1\setminus R_1$ has at least one  neighbor in $R_1$ independently with probability\\
$1-(1-0.22)^{|R_1|} \leq 1-n^{-1/2}.$ Therefore 
$$\pr(\exists v\in V_1: (\{v\}\cup N(v))\cap R=\emptyset)\geq 1-  (1-n^{-1/2})^{0.5n}=1-o(1).$$ 
Similarly, $\pr(\exists v\in V_2: (\{v\}\cup N(v))\cap R=\emptyset)=1-o(1).$ Hence w.h.p.\@ $G$ is not rainbow connected when $r=4$.
\subsection{Proof of Theorem \ref{RainbowConnectivity} (iii)}
We extract from $V$ a small set of vertices $S$ such that for every $v\in V$ there exists $s\in S$ that shares many neighbors with $v$ in $H$ (see Lemma \ref{connections}). We then show that any two vertices in $S$ are connected by a rainbow path of length 3. We extend these paths into many paths of length 7 to show that w.h.p. $G^7_{H,m}$ is rainbow connected.
\begin{lem}\label{connections}
Let $G \in \mathcal{G}(n,\delta)$. Then there exists $S\subset V$ satisfying the following conditions: 
\begin{enumerate}
\item $|S|\leq 2/\delta$.
\item $\forall v \in V\setminus S$, there exists $s\in S$ such that $|N(v) \cap N(s)| \geq \delta^2 n/4$.
\end{enumerate}
\end{lem}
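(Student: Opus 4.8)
The plan is to construct $S$ greedily. The key observation is that any vertex $v$ together with its neighborhood $N(v)$ forms a set of size at least $\delta n + 1$, and any two vertices sharing fewer than $\delta^2 n/4$ common neighbors must have ``mostly disjoint'' closed neighborhoods. So I would repeatedly pick a vertex, add it to $S$, and remove from consideration all vertices that already share many neighbors with it; the bound on $|S|$ will come from a counting argument showing this process terminates quickly.

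More precisely, I would run the following procedure. Start with $U = V$ and $S = \emptyset$. While $U \neq \emptyset$, pick any $s \in U$, add $s$ to $S$, and delete from $U$ both $s$ and every vertex $v$ with $|N(v) \cap N(s)| \geq \delta^2 n/4$. When the loop ends, every vertex of $V$ is either in $S$ or was deleted because it shares at least $\delta^2 n/4$ neighbors with some $s \in S$, which is exactly condition~2 (for $v \in S$ the condition is vacuous or can be arranged by taking $s=v$ and noting $|N(v)|\ge \delta n \ge \delta^2 n/4$). So correctness of condition~2 is immediate from the stopping rule; the whole content is bounding $|S|$.

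To bound $|S|$, the main idea is to charge neighborhoods. For each $s$ added to $S$, consider the set $N(s)$, which has size at least $\delta n$. I would argue that the sets $N(s)$ for $s \in S$ cannot overlap too much: if $s, s' \in S$ are two distinct chosen vertices, then by construction $s'$ was \emph{not} deleted when $s$ was processed, so $|N(s) \cap N(s')| < \delta^2 n/4$. Thus the neighborhoods of the chosen vertices are nearly pairwise disjoint. Summing $|N(s)| \geq \delta n$ over $s \in S$ and using inclusion--exclusion to subtract the small pairwise overlaps (each at most $\delta^2 n/4$, and there are $\binom{|S|}{2}$ pairs), the union of all these neighborhoods lives inside $V$, which has size $n$. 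This yields an inequality of the shape $|S|\,\delta n - \binom{|S|}{2}\frac{\delta^2 n}{4} \leq n$, which rearranges to a bound on $|S|$ of order $1/\delta$; tuning the constants gives $|S| \leq 2/\delta$.

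The main obstacle I anticipate is making the inclusion--exclusion step clean enough to extract the exact constant $2/\delta$ rather than merely $O(1/\delta)$. The crude union bound $\sum_{s\in S}|N(s)| - \sum_{s\neq s'}|N(s)\cap N(s')| \le \big|\bigcup_s N(s)\big| \le n$ is only a lower bound on the union (Bonferroni), so I would need to verify the direction of the inequality is the one I want, or instead partition the neighborhoods into disjoint pieces by assigning each vertex to the first $s\in S$ whose neighborhood contains it. The latter ``first-come'' charging avoids inclusion--exclusion entirely: each $s$ then owns at least $\delta n - (|S|-1)\frac{\delta^2 n}{4}$ vertices of its neighborhood not claimed earlier, and summing these disjoint contributions against $n$ gives the bound directly. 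I expect this disjointification to be the cleanest route and would phrase the argument that way to nail down the constant.
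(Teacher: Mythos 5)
Your proposal is correct and is essentially the paper's argument: the paper takes $S$ to be a maximal set with all pairwise common neighborhoods below $\delta^2 n/4$ (your greedy procedure builds exactly such a set) and then bounds $|S|$ by lower-bounding $\bigl|\bigcup_{s}N(s)\bigr|$ via the Bonferroni inequality $\sum_s |N(s)| - \sum_{s\neq s'}|N(s)\cap N(s')|$, which is the same counting you describe. The one detail you should make explicit is that the resulting inequality $|S|\delta n - \binom{|S|}{2}\tfrac{\delta^2 n}{4}\le n$ is a downward parabola in $|S|$ and so does not by itself force $|S|$ to be small; as in the paper, you should apply the count only to a subset $S_1\subseteq S$ of size exactly $\lceil 2/\delta\rceil$ (assuming one exists) to get $\bigl|\bigcup_{s\in S_1}N(s)\bigr|\ge \tfrac32 n > n$ and hence a contradiction.
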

\begin{proof}
Let $S$ be a maximal subset of $V$ such that for every $v,w \in S$ we have $|N(v) \cap N(w)| < \delta^2 n/4$. Then the maximality of $S$ implies that $S$ satisfies the second condition of our Lemma.  Then either $|S|< 2/\delta$ or there exist $S_1\subset S$ of size $\lceil 2/\delta \rceil$. In the latter case we have 
\begin{multline*}
n=|V| \geq \ \left|\bigcup_{s\in S_1} N(s)\right| \geq \sum_{s\in S_1}|N(s)|- \sum_{s_1\neq s_2\in S_1} |N(s_1)\cap N(s_2)| \\
\geq \delta n |S_1| - \frac{\delta^2 n}{4}\cdot \frac{|S_1|^2}{2} = \delta |S_1| n \bigg(1-\frac{\delta |S_1|}{8}\bigg)>n.
\end{multline*} 
Contradiction.
\end{proof}
{\bf{Proof of  Theorem \ref{RainbowConnectivity} (iii)}}. 
Let $S$ be a set satisfying the conditions of Lemma \ref{connections}.
For $v\in V$ let $s_v\in S$ be such that $|N(v) \cap N(s)| \geq \delta^2 n/4$.
Let ${\cal{J}}_S$ be the event that every pair of vertices $s_1,s_2$ are joined by three edge disjoint rainbow paths. Since $|S|=O(1)$ and each vertex in $S$ has $\Omega(n)$ neighbors and $m=\omega(n)$ and $r=7$ we have $P({\cal{J}}_S)=1-o(1)$. Given ${\cal{J}}_S$  occuring let $v_1,v_2 \in S$.
Then for any pair of vertices $v_1,v_2$, there is a rainbow path $P_{v_1,v_2}$ of length 3  from $s_{v_1}$ to $s_{v_2}$ not containing $v_1,v_2$. Assume that $v_1,v_2 \notin S$ and that they share fewer than $\log^2n$ neighbors. Let $J_{v_1,v_2}$ be the event that $P_{v_1,v_2}$ can be extended to a rainbow path 
from $v_1$ to $v_2$. Assume that $P_{v_1,v_2}$ uses colors $5,6,7$. Then there will be a rainbow path from $v_1$ to $v_2$ if there is a vertex $w\in N(v_1)\cap N(s_{v_1})$ such that edge $\set{v_1,w}$ gets color 1 and edge $\set{w,s_{v_1}}$ gets color 2 and colors 3,4 are similarly used for $v_2,s_{v_2}$. It follows that
$$\Pr(J_{v_1,v_2} \text{ does not occur } ) \leq  2\brac{1-\bfrac{1}{7}^2}^{\delta^2 n/4 -\log^2 n}=o(n^{-3}).$$
The remaining cases for $v_1,v_2$ follow in a similar manner. Taking a union bound over $v_1,v_2$ give us Theorem \ref{RainbowConnectivity} (iii). 
\section{Conclusion}
We have extended the notion of adding random edges to dense graphs and asking probabilistic questions to that of adding randomly colored edges. The most interesting question for us that is left open by the above analysis is the gap between 4 and 7 in Theorem \ref{RainbowConnectivity} (iii).

\end{document}